\documentclass{amsart}

\usepackage{amsfonts, color}
\usepackage{latexsym}
\usepackage{amssymb}
\usepackage{amsmath}
\usepackage{amsthm}
\usepackage{epsfig}
\usepackage{graphicx,colortbl}
\usepackage{url,hyperref}
\usepackage{mathtools, color, changes}

\newcommand{\R}{\mathbb R}

\newtheorem{thm}{Theorem}[section]

\newtheorem{lemma}{Lemma}[section]

\newtheorem{rmk}{Remark}[section]

\numberwithin{equation}{section}

\newcommand{\tg}{\widetilde g_{K,L}}

\hypersetup{hidelinks}

\begin{document}

\title[Improved Brunn--Minkowski inequality for $\theta$-convolution bodies]
{Improved Brunn--Minkowski inequality for $\theta$-convolution bodies}

\author{Javier Martín Goñi}
\address{Faculty of Computer Science and Mathematics, University of Passau, Innstrasse 33, 94032 Passau, Germany}
\email{j.martin@unizar.es}

\thanks{Supported by Project
PID2025-169474NB-C22  funded by  MICIU/AEI/10.13039/501100011033/ and partially supported by DFG project 516672205 and by DGA project E48\_26R}

\begin{abstract}
For $n\geq 2$, we study the optimal function $\varphi_n$ in the Brunn--Minkowski
inequality for $\theta$-convolution bodies
\[
 |K+_\theta L|^{1/n}\ge
 \varphi_n(\theta)\bigl(|K|^{1/n}+|L|^{1/n}\bigr)
\]
for any convex bodies $K,L \subset \R^n$. We improve the known lower bound for every $0<\theta<(3/4)^n$
and determine $\varphi_n$ on an interval strictly containing
$[(2/3)^n,1]$. For $\theta<1$ in this interval, equality holds if and
only if $K$ and $-L$ are translates of the same
$n$-dimensional simplex. The proof reduces the geometric
problem to a sharp one-dimensional estimate for nonnegative concave
functions under a weighted integral constraint.
\end{abstract}

\keywords{$\theta$-convolution bodies, Brunn--Minkowski inequality,
Rogers--Shephard inequality, concave functions}
\date{\today}
\maketitle

\section{Introduction and main results}

Let $K,L\subseteq\R^n$ be convex bodies, and let $|\cdot|$ denote
$n$-dimensional Lebesgue measure. The Brunn--Minkowski inequality
gives the sharp lower bound
\begin{equation}\label{eq:BM-classical-intro}
 |K+L|^{1/n}\ge |K|^{1/n}+|L|^{1/n}
\end{equation}
for the volume of their Minkowski sum
$K+L=\{x+y:x\in K,\ y\in L\} = \{z \in \R^n : K\cap (z-L) \neq \emptyset\}$, where equality holds if and only if \(K\) and \(L\) are homothetic; see
\cite[Theorem 7.1.1]{Sch}. The $\theta$-convolution bodies
studied in \cite{AJV} restrict this sum to translations
for which the intersection volume is at least a prescribed
fraction of its maximum. More precisely, their
$\theta$-convolution body is
\begin{equation}\label{eq:convolution-body}
 K+_\theta L
 =\{z\in K+L:|K\cap(z-L)|\ge\theta M(K,L)\},
 \qquad 0\le\theta\le1,
\end{equation}
where
\[
 M(K,L)=\max_{z\in\R^n}|K\cap(z-L)|.
\]
In particular, $K+_0L=K+L$. These bodies interpolate between the
Minkowski sum and the set of translations at which the intersection
volume is maximal; see \cite{AJV,AGMG}. Earlier work on convolution level sets of symmetric convex
bodies includes \cite{Schm,Tso}.

Following \cite{AJV,AGMG}, we consider the largest function
$\varphi_n:[0,1]\to\R$ such that
\begin{equation}\label{eq:BM}
 |K+_\theta L|^{1/n}\ge
 \varphi_n(\theta)\bigl(|K|^{1/n}+|L|^{1/n}\bigr)
\end{equation}
for every pair of convex bodies. Equivalently,
\begin{equation}\label{eq:optimal-function}
 \varphi_n(\theta)=\inf_{K,L}
 \frac{|K+_\theta L|^{1/n}}{|K|^{1/n}+|L|^{1/n}}.
\end{equation}
The Brunn--Minkowski inequality \eqref{eq:BM-classical-intro} and its equality cases give $\varphi_n(0)=1$, and
opposite convex bodies $K=-L$ give $\varphi_n(1)=0$. We assume $n\ge2$.


\medskip

The first known bound for $\varphi _n$ was given by Alonso-Guti\'errez, Jim\'enez and Villa, where they established
in \cite[Corollary 3.2]{AJV} that
\[
 \varphi_n(\theta)\ge 1-\theta^{1/n},
 \qquad 0\le\theta\le1.
\]
Later, Alonso-Guti\'errez and Mart\'in Go\~ni improved this bound, by proving in
\cite[Theorem 1.1]{AGMG} that
\begin{equation}\label{eq:AGMG}
 \varphi_n(\theta)\ge
 \begin{cases}
  1-\left(\dfrac43-\dfrac16\binom{2n}{n}^{1/n}\right)\theta^{1/n},
       &0\le\theta\le(3/4)^n,\\[4pt]
  \dfrac12\binom{2n}{n}^{1/n}(1-\theta^{1/n}),
       &(3/4)^n\le\theta\le1.
 \end{cases}
\end{equation}
The second expression is optimal in its stated interval, and its proof uses an inclusion relating Ball's bodies of the function $g_{K,L}(x)=|K\cap(x-L)|$ to its superlevel sets. 
The first is obtained by interpolation, in the variable
$\theta^{1/n}$, between Brunn--Minkowski and the sharp value at
$\theta=(3/4)^n$. As noted in \cite{AGMG}, the level $(3/4)^n$ comes
from the argument used there. This raises the question of finding the largest interval $[\theta _0 , 1]$ in which the second expression in \eqref{eq:AGMG} is sharp, and therefore the ratio in \eqref{eq:optimal-function} is minimized by opposite simplices.

\medskip

Our bound determines $\varphi_n$ exactly on an interval
strictly containing $[(2/3)^n,1]$, extending the interval
$[(3/4)^n,1]$ obtained in \cite{AGMG}.
For $\theta<1$ in this larger interval, equality holds
if and only if $K$ and $-L$ are translates of the same
simplex. We also strictly improve the lower bound of $\varphi _n$
in \eqref{eq:AGMG} for every $0<\theta<(3/4)^n$.

For $n\ge2$, let $\beta_n$ be the unique number in $(0,1)$ satisfying
\begin{equation}\label{eq:beta-definition}
 n\int_0^1t^{n-1}(1-\beta_n t)^n\,\mathrm dt=2^{-n}.
\end{equation}
Existence and uniqueness are verified in Section~\ref{sec:proof}.

\begin{thm}\label{thm:main}
Let $n\ge2$. For every $0\le\theta\le1$,
\begin{equation}\label{eq:new-bound}
 \varphi_n(\theta)\ge
 \min\left\{
  1-\beta_n\theta^{1/n},\,
  \frac12\binom{2n}{n}^{1/n}(1-\theta^{1/n})
 \right\}.
\end{equation}
Moreover,
\begin{equation}\label{eq:exact-value}
 \varphi_n(\theta)=
 \frac12\binom{2n}{n}^{1/n}(1-\theta^{1/n})
\end{equation}
whenever $\theta \in [\theta _0 , 1]$, with
\begin{equation}\label{eq:sharp-range}
 \theta _0 = \left(
 \frac{\binom{2n}{n}^{1/n}-2}{\binom{2n}{n}^{1/n}-2\beta_n}
 \right)^n .
\end{equation}
For every $\theta \in [\theta _0 , 1)$, equality in \eqref{eq:BM} holds if and only if $K$ and $-L$
are translates of the same $n$-dimensional simplex.
\end{thm}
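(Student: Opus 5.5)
The plan is to reduce everything to the function $g=g_{K,L}$, $g(x)=|K\cap(x-L)|$, and to a one-dimensional inequality along rays.

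\textbf{Setup.} By Brunn--Minkowski, $f=(g/M(K,L))^{1/n}$ is concave on $K+L$, with maximum $1$ attained at some $x_0$. By Fubini, $\int_{K+L} g=|K||L|$. Since $M(K,L)\le\min\{|K|,|L|\}$, we get
\[
 \int_{K+L} f^n=\frac{|K||L|}{M(K,L)}\ge\max\{|K|,|L|\}\ge 2^{-n}\bigl(|K|^{1/n}+|L|^{1/n}\bigr)^n .
\]
Write $K+L$ in polar coordinates about $x_0$ with radial function $\rho(u)$, and set $h_u(t)=f(x_0+t\rho(u)u)$ for $t\in[0,1]$. Each $h_u$ is concave and nonnegative with $h_u(0)=1$. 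Then
\[
 \int f^n=\int_{S^{n-1}}\frac{\rho(u)^n}{n}\Bigl(n\int_0^1t^{n-1}h_u(t)^n\,\mathrm dt\Bigr)\mathrm du .
\]
Moreover $K+_\theta L=\{f\ge s\}$ with $s=\theta^{1/n}$, and this set is star-shaped about $x_0$. Hence $|K+_\theta L|=\int\frac{\rho^n}{n}\tau_u^n$, where $\tau_u$ is the largest $t$ with $h_u(t)\ge s$.

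\textbf{One-dimensional lemma (main obstacle).} For concave $h\ge0$ on $[0,1]$ with $h(0)=1$ and level fraction $\tau$ at height $s$, concavity forces $h(t)\le 1$ on $[0,\tau]$ and $h(t)\le 1-(1-s)t/\tau$ on $[\tau,1]$. This gives
\[
 n\int_0^1 t^{n-1}h^n\le\Phi_s(\tau):=\tau^n+n\int_\tau^1t^{n-1}\bigl(1-(1-s)t/\tau\bigr)_+^n\,\mathrm dt .
\]
I then need a sharp inequality of the form $\Phi_s(\tau)\le C(s)\,\tau^n$, where $C(s)^{-1/n}\cdot\frac12$ equals the right-hand side of \eqref{eq:new-bound}. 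Integrating over $u$ would then give
\[
 |K+_\theta L|\ge C(s)^{-1}\int f^n\ge C(s)^{-1}2^{-n}\bigl(|K|^{1/n}+|L|^{1/n}\bigr)^n .
\]
I would split into two regimes by whether the line from $(0,1)$ through $(\tau,s)$ hits zero before $t=1$, that is, whether $\tau\le 1-s$ or not.
\begin{itemize}
\item In the regime where the line reaches zero inside $[0,1]$, the worst case should be the linear profile $h(t)=1-t$ extended. This gives $\tau=1-s$ and the cone (simplex) bound $\binom{2n}{n}$-type constant; this is the AGMG branch.
\item In the regime where the profile does not vanish, the substitution $t\mapsto$ scaled variable should reduce extremality to $h(t)=1-\beta t$. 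The defining relation \eqref{eq:beta-definition}, with right-hand side $2^{-n}$, is exactly the condition that $\tau=1-\beta s$ saturates the constraint. This produces the branch $1-\beta_n\theta^{1/n}$.
\end{itemize}
Proving that $\Phi_s(\tau)/\tau^n$ is monotone in $\tau$, so that the extremes occur at these two profiles, is the delicate calculus step. Existence and uniqueness of $\beta_n$ follow because the left side of \eqref{eq:beta-definition} decreases from $1$ to $\binom{2n}{n}^{-1}<2^{-n}$ as $\beta$ runs over $[0,1]$.

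\textbf{Conclusion and sharpness.} Taking the minimum over the two regimes yields \eqref{eq:new-bound}. The two expressions agree when $1-\beta_n x=\frac12\binom{2n}{n}^{1/n}(1-x)$ with $x=\theta^{1/n}$, which gives exactly $x=\theta_0^{1/n}$ in \eqref{eq:sharp-range}. For $\theta\ge\theta_0$ the minimum is the second expression. This value is attained by $K=-L=\Delta$ a simplex: there $|\Delta-\Delta|=\binom{2n}{n}|\Delta|$, $g^{1/n}$ is affine along rays from the origin, and so $|K+_\theta L|=(1-\theta^{1/n})^n\binom{2n}{n}|\Delta|$. This proves \eqref{eq:exact-value}.

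\textbf{Equality cases.} For equality I would trace back the chain. We need $|K|=|L|=M(K,L)$, so $K$ and $-L$ are translates of each other. We also need every $h_u$ to be affine and vanishing at $t=1$, so $g^{1/n}$ is affine on rays from its maximum. This is equality in Rogers--Shephard, which characterizes simplices. For $\theta<1$ the level fraction is strictly positive, so this equality information is not lost.
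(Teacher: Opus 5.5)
\textbf{There is a genuine gap.} The one-dimensional lemma you need is false, and no ray-by-ray version of it can hold.

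\emph{Your lemma caps the bound at $\tfrac12$.} By definition $\Phi_s(\tau)=\tau^n+(\text{nonnegative term})\ge\tau^n$. So any constant with $\Phi_s(\tau)\le C(s)\tau^n$ has $C(s)\ge1$, and your chain gives at most $|K+_\theta L|^{1/n}\ge\frac12\bigl(|K|^{1/n}+|L|^{1/n}\bigr)$. The right-hand side of \eqref{eq:new-bound} exceeds $\frac12$ for small $\theta$ and tends to $1$ as $\theta\to0$. At $\theta=0$ your argument yields only $|K+L|\ge2^{-n}(|K|^{1/n}+|L|^{1/n})^n$, not Brunn--Minkowski.

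\emph{It also fails on the sharp range.} Evaluating at $\tau=1-s$,
\[
 \Phi_s(1-s)-\binom{2n}{n}^{-1}=n\int_0^{1-s}t^{n-1}\bigl(1-(1-t)^n\bigr)\,\mathrm dt>0 .
\]
Hence your constant is strictly larger than the simplex constant $\binom{2n}{n}^{-1}(1-s)^{-n}$. The bound you obtain is therefore strictly below $\frac12\binom{2n}{n}^{1/n}(1-s)$ for every $0<s<1$, so \eqref{eq:exact-value} is never reached.

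\emph{Sharpening the radial estimate does not help.} You could also use $h_u(1)=0$, which you omit; it forces $\tau\ge1-s$, so your first regime is degenerate. Even then, the admissible profiles $h(t)=\min\{1,s(1-t)/(1-\tau)\}$ satisfy $n\int_0^1t^{n-1}h^n\,\mathrm dt/\tau^n\to1$ as $\tau\to1$. So any uniform per-ray constant is still at least $1$. Your heuristics that the extremal radial profiles are $1-t$ and $1-\beta_nt$ are unfounded.

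\textbf{Where the loss comes from.} You decouple a worst-case radial estimate from the global bound $\int f^n\ge2^{-n}(|K|^{1/n}+|L|^{1/n})^n$. That global bound is tight only when $K$ and $-L$ are translates, while the radial worst case can be nearly flat. You also never use $|K+L|^{1/n}\ge|K|^{1/n}+|L|^{1/n}$, which is exactly what produces the branch $1-\beta_n\theta^{1/n}$.

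\textbf{How the paper proceeds.} It works in the level variable, not the radial one. Set $S=|K|^{1/n}+|L|^{1/n}$ and $H(s)=|K+_{s^n}L|^{1/n}/S$. Then:
\begin{itemize}
\item $H$ is concave on $[0,1]$, by the inclusion $(1-\lambda)L_r+\lambda L_s\subseteq L_{(1-\lambda)r+\lambda s}$ together with Brunn--Minkowski;
\item $H(0)\ge1$;
\item $n\int_0^1s^{n-1}H(s)^n\,\mathrm ds=|K||L|/(M(K,L)S^n)\ge2^{-n}$, by the layer-cake formula.
\end{itemize}
A supporting line of $H$ at $s$ has slope between those of the two affine functions through $(s,H(s))$ that take the value $1$ at $0$ and $0$ at $1$. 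The weighted integral is convex in the slope. Hence one of the two endpoint integrals, $I_n\bigl((1-H(s))/s\bigr)$ or $\binom{2n}{n}^{-1}\bigl(H(s)/(1-s)\bigr)^n$, is at least $2^{-n}$, and this is \eqref{eq:new-bound}.

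Your computation of $\theta_0$, the simplex example and the Rogers--Shephard step for equality are fine. They all depend, however, on the inequality you have not established.
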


\begin{rmk}
The coefficient in \eqref{eq:beta-definition} satisfies (see Section \ref{sec:comparison} below)
\[
 0<\beta_n<2\binom{2n}{n}^{-1/n}.
\]
Since $2<\binom{2n}{n}^{1/n}<4$, then
\[
 \frac{\binom{2n}{n}^{1/n}-2}
      {\binom{2n}{n}^{1/n}-2\beta_n}
 <\frac{\binom{2n}{n}^{1/n}}{\binom{2n}{n}^{1/n}+2}
 <\frac23.
\]
Raising to the $n$th power shows that $\theta _0$ is strictly smaller than $(2/3)^n$.
Consequently, Theorem~\ref{thm:main} implies
\[
 \varphi_n(\theta)=\frac12\binom{2n}{n}^{1/n}
 (1-\theta^{1/n}),\qquad (2/3)^n\le\theta\le1.
\]
\end{rmk}

The extension of the exact
interval is already substantial in dimension two: in this case,
\eqref{eq:beta-definition} gives
\[
 \beta_2=\frac{8-\sqrt{10}}6,
\]
so we obtain the exact value of $\varphi_2$ in the interval
$(0.2885,1)$, whereas the previous known sharp range in \cite{AGMG} begins at
$9/16=0.5625$. In our interval, the optimal coefficient is therefore
determined exactly as
\[
 \varphi_2(\theta)=\sqrt{\frac32}\,(1-\sqrt{\theta}),
\]
with equality precisely for opposite translates of the same
triangle.

\medskip

The proof of Theorem~\ref{thm:main} relies on the following sharp inequality
for nonnegative concave functions. For each $s\in[0,1]$,
the theorem gives the smallest possible value of $H(s)$
under its assumptions. This value is attained by one
of the two affine functions in the statement.

\begin{thm}\label{lem:scalar}
Let $n\ge2$ be an integer, and let $\beta_n$ be defined by
\eqref{eq:beta-definition}. Let $H:[0,1]\to[0,\infty)$ be
continuous and concave. Assume that $H(0)\ge1$ and
\begin{equation}\label{eq:scalar-mass}
 n\int_0^1t^{n-1}H(t)^n\,\mathrm dt\ge2^{-n}.
\end{equation}
Then
\begin{equation}\label{eq:scalar-bound}
 H(s)\ge\min\left\{1-\beta_ns,
 \frac12\binom{2n}{n}^{1/n}(1-s)\right\},
 \qquad 0\le s\le1.
\end{equation}
This bound is optimal at every $s$ under the stated hypotheses.
If equality holds in \eqref{eq:scalar-bound} at some $0<s<1$,
then either
\[
 H(t)=1-\beta_nt,\quad\text{for all }t\in[0,1],
\]
or
\[
 H(t)=\frac12\binom{2n}{n}^{1/n}(1-t),
 \quad\text{for all }t\in[0,1].
\]
In either case equality holds in \eqref{eq:scalar-mass}.
\end{thm}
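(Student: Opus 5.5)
Fix $s\in(0,1)$ and put $h=H(s)$. The plan is to replace $H$ by the smallest concave function compatible with the value $h$ at $s$ and with $H(0)\ge1$, $H\ge0$. Then check when that extremal function can still satisfy \eqref{eq:scalar-mass}.

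Concavity gives an upper bound on the rest of the graph. For $t\in[0,s]$ one cannot bound $H$ from above using only $H(0)$ and $H(s)$. Instead, extend the chord through $(0,H(0))$ and $(s,h)$: for $t\ge s$ concavity gives
\[
H(t)\le H(0)+\frac{t}{s}\bigl(h-H(0)\bigr),
\]
and for $t\le s$ it gives $H(t)\le$ the line through $(s,h)$ with any supergradient slope. The cleaner route is the following. Any supergradient line $\ell(t)=h+m(t-s)$ at $s$ satisfies $H\le\ell$ on $[0,1]$. Since $H\ge0$, we get $H\le \ell_+$, and $\ell(0)=h-ms\ge H(0)\ge1$. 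Hence
\[
2^{-n}\le n\int_0^1 t^{n-1}\ell_+(t)^n\,dt=:F(h,m),
\qquad\text{with } h-ms\ge1.
\]
So it suffices to show the following: if $h<\min\{1-\beta_n s,\tfrac12\binom{2n}{n}^{1/n}(1-s)\}$, then every line through $(s,h)$ with $\ell(0)\ge1$ has $F<2^{-n}$. The family of such lines is one-parameter, indexed by the intercept $a=\ell(0)\ge1$, with slope $(h-a)/s$. There are two cases.

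\textbf{Case 1: the line stays nonnegative on $[0,1]$.} Then $F$ is the integral of a convex function of $a$, so it is convex in $a$, and its maximum is at an endpoint. One endpoint is $a=1$. The other is the largest $a$ for which the line vanishes exactly at $t=1$. Beyond that point the line becomes $\ell_+$ with root $r<1$.

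\textbf{Case 2: the line is truncated, $\ell_+(t)=c(r-t)_+$.} Here $c(r-s)=h$, and
\[
F=c^n n\int_0^r t^{n-1}(r-t)^n\,dt=c^n r^{2n}\binom{2n}{n}^{-1}.
\]
Substituting $c=h/(r-s)$, one checks that $r^{2}/(r-s)$ is minimized at $r=2s$ and is increasing for $r\ge 2s$. So on the admissible range $r\le1$ the function $F$ is monotone toward $r=1$ provided $s\ge1/2$. For $s<1/2$ I expect the constraint $a=cr\ge1$ to bound $r$ instead, and the candidates become $r=1$ and the intercept-$1$ line.

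The upshot is that the maximum of $F$ is attained at one of the two affine profiles: the line with $\ell(0)=1$, or the line vanishing at $t=1$. These give exactly the conditions $h\ge1-\beta_n s$, using \eqref{eq:beta-definition} and monotonicity in $\beta$ of $n\int t^{n-1}(1-\beta t)^n\,dt$, and $h\ge\tfrac12\binom{2n}{n}^{1/n}(1-s)$, using the Beta integral above with $r=1$.

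\textbf{Main obstacle.} The hard step is the case analysis proving that the endpoint maximization really yields the claimed minimum. In particular, the truncated regime with root $r<1$ and $a\ge1$ must never beat both affine candidates. I would first parametrize everything by $a$, show $a\mapsto F$ is quasi-convex across the transition at $r=1$, and handle the region with $r<1$ by direct comparison of $c^nr^{2n}$ at the constraint boundary.

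\textbf{Optimality.} At each $s$, the bound is attained by whichever affine function gives the minimum. Both affine functions satisfy \eqref{eq:scalar-mass} with equality and have $H(0)\ge1$; the second has $H(0)=\tfrac12\binom{2n}{n}^{1/n}>1$.

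\textbf{Equality cases.} If equality holds at some $s\in(0,1)$, all inequalities above are tight. In particular $n\int t^{n-1}H^n=n\int t^{n-1}\ell_+^n$ forces $H=\ell_+$ almost everywhere, hence everywhere by continuity. The endpoint analysis forces $\ell$ to be one of the two affine extremals, and equality in \eqref{eq:scalar-mass} then follows.
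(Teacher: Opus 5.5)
Your Case~1 is the right argument, and it is essentially the paper's whole proof. The gap is in the reduction you build around it. You keep only $\ell(0)\ge1$, pass to $\ell_+$, and claim it suffices to show $F<2^{-n}$ for \emph{every} line through $(s,h)$ with $\ell(0)\ge1$. That claim is false. For intercept $a>h/(1-s)$ the line is truncated, with slope $m=(a-h)/s$ and root $r=sa/(a-h)<1$. Your own formula then gives
\[
F=\binom{2n}{n}^{-1}(mr^2)^n=\binom{2n}{n}^{-1}\Bigl(\frac{sa^2}{a-h}\Bigr)^n\longrightarrow\infty\qquad(a\to\infty).
\]
For example, $n=2$, $s=1/2$, $h=0.55<1-\beta_2/2\approx0.597$, $a=10$ gives $F\approx4.7>1/4$. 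Your computation in Case~2 points the same way. On $(s,\min\{2s,1\}]$ the ratio $r^2/(r-s)$ is decreasing and blows up as $r\to s^+$, so $F$ is not maximized at $r=1$. The intercept constraint $\ell(0)\ge1$ is equivalent to $r\le s/(1-h)$, an upper bound on $r$, so it cannot prevent this. Hence no analysis of the truncated regime can close the argument. The ``main obstacle'' only arises because you discarded information.

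The missing observation is that a supporting line satisfies $\ell\ge H\ge0$ on all of $[0,1]$, so in particular $\ell(1)\ge H(1)\ge0$. Together with $\ell(0)\ge H(0)\ge1$, this confines the intercept to $[1,\,h/(1-s)]$ and gives $\ell_+=\ell$, so Case~2 never arises. The interval is nonempty because concavity gives $h\ge(1-s)H(0)+sH(1)\ge1-s$. Your Case~1 convexity argument over this interval then proves the inequality, using the two endpoint lines you already identified. For the equality statement, plain convexity does not force the supporting line to be an endpoint. You need strict convexity of $a\mapsto F$ to exclude an interior parameter before concluding $H=\ell$. It holds because
\[
\partial_a^2\,\ell(t)^n=n(n-1)(1-t/s)^2\,\ell(t)^{n-2}>0\qquad\text{for } t\in(0,1)\setminus\{s\}.
\]
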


For a pair of convex bodies, we apply this theorem to the
normalized volume profile
\[
 H(s)=\frac{|K+_{s^n}L|^{1/n}}
 {|K|^{1/n}+|L|^{1/n}},\qquad 0\le s\le1.
\]
The level-set inclusion in \cite[Proposition 2.2]{AJV}, together
with Brunn--Minkowski, gives concavity and $H(0)\ge1$, while
the convolution integral identity and the elementary bound
$M(K,L)\le\min\{|K|,|L|\}$ yield \eqref{eq:scalar-mass};
see also \cite[proof of Theorem 1.1]{AGMG}.
These properties, including the equality case in the integral
bound, are collected in Lemma~\ref{lem:volume-profile}.
Theorem~\ref{lem:scalar} gives \eqref{eq:new-bound}.
On the interval $[\theta _0 , 1]$, this lower bound
coincides with the value attained by opposite simplices,
so it determines $\varphi_n$ exactly.
If equality holds for $\theta<1$ in this interval,
equality in the integral bound first forces $K$ and
$-L$ to be translates. The equality case of the
Rogers--Shephard inequality \cite{RS} then shows
that they are simplices.

Since \eqref{eq:scalar-bound} is optimal under the
hypotheses of Theorem~\ref{lem:scalar}, any stronger
bound for $\varphi_n$ would have to use further
properties of the volume profiles.

\medskip

The structure of this work is the following. Section~\ref{sec:preliminaries} recalls the classical results
used in the proof. Section~\ref{sec:proof} establishes the volume
profile properties and the scalar estimate, and proves
Theorem~\ref{thm:main}, including its equality cases.
Section~\ref{sec:comparison} explains the role of $\beta_n$,
compares our bound with \eqref{eq:AGMG}, and uses Euclidean balls
to limit how far the interval of optimality of opposite
simplices could extend towards zero.

\section{Preliminaries}\label{sec:preliminaries}

We recall the notation and the classical facts used in the proof.
A convex body is a compact convex subset of $\R^n$ with nonempty
interior, and $|\cdot|$ always denotes $n$-dimensional Lebesgue
measure. We write $K-K=K+(-K)$ for the difference body of $K$.

For convex bodies $K,L$, the generalized covariogram is the
convolution of their indicator functions:
\[
 |K\cap(z-L)|=(\mathbf1_K*\mathbf1_L)(z),\qquad z\in\R^n;
\]
see \cite[Section 2.3]{AGMG}. This function is continuous, has
support $K+L$, and is positive precisely on
$\operatorname{int}(K+L)$. In particular, its maximum $M(K,L)$
is positive and attained. For a fixed pair $K,L$, we write
\[
 \tg(z)=\frac{|K\cap(z-L)|}{M(K,L)}.
\]
By the Brunn–Minkowski inequality, \(\widetilde g_{K,L}^{\,1/n}\) is concave on \(K+L\). We denote the superlevel sets of \(\widetilde g_{K,L}\) by
\[
 L_s=\{z\in K+L:\tg(z)\ge s^n\}
     =K+_{s^n}L,\qquad 0\le s\le1.
\]
These sets are nonempty and compact. We have $L_0=K+L$,
whereas $L_1$ consists of the points at which the maximum is
attained and may have zero volume.

Translations of $K$ and $L$ leave $M(K,L)$ unchanged and satisfy
\[
 (K+a)+_\theta(L+b)=a+b+(K+_\theta L),
 \qquad a,b\in\R^n,\quad 0\le\theta\le1.
\]
Thus they do not change the ratio defining $\varphi_n$.

We use the Brunn--Minkowski inequality in its concavity form:
for nonempty compact convex sets $A,B$ and $0\le\lambda\le1$,
\begin{equation}\label{eq:classical-BM}
 |(1-\lambda)A+\lambda B|^{1/n}
 \ge(1-\lambda)|A|^{1/n}+\lambda|B|^{1/n};
\end{equation}
see \cite[Theorem 7.1.1]{Sch}. The inequality also applies when
one of the sets has zero volume, which is needed for the
maximum level $L_1$.

To identify the equality cases, we use the Rogers--Shephard
inequality \cite{RS}:
\begin{equation}\label{eq:RS}
 |K-K|\le\binom{2n}{n}|K|,
\end{equation}
with equality if and only if $K$ is an $n$-dimensional simplex.
For such a simplex $\Delta$, the convolution bodies have the
explicit form
\begin{equation}\label{eq:simplex-levels}
 \Delta+_{s^n}(-\Delta)=(1-s)(\Delta-\Delta),
 \qquad 0\le s\le1;
\end{equation}
see \cite[Proposition 2.8]{AJV} or
\cite[Section 2.3]{AGMG}. Hence equality in \eqref{eq:RS} gives
\[
 \frac{|\Delta+_{s^n}(-\Delta)|^{1/n}}{2|\Delta|^{1/n}}
 =\frac{(1-s)|\Delta-\Delta|^{1/n}}{2|\Delta|^{1/n}}
 =\frac12\binom{2n}{n}^{1/n}(1-s).
\]
Since $\varphi_n$ is the infimum of the corresponding ratios over
all pairs of convex bodies, this yields
\begin{equation}\label{eq:simplex-upper}
 \varphi_n(s^n)\le
 \frac12\binom{2n}{n}^{1/n}(1-s),\qquad 0\le s\le1.
\end{equation}
Thus proving the reverse inequality on an interval determines
$\varphi_n$ there, with opposite simplices attaining its value.

Finally, we recall two integral identities. The beta integral
\begin{equation}\label{eq:beta-value}
 n\int_0^1 t^{n-1}(1-t)^n\,\mathrm dt
 =\binom{2n}{n}^{-1}
\end{equation}
which will be used to
integrate the affine functions in the scalar estimate, and the layer-cake identity
\begin{equation}\label{eq:layer-cake}
 \int_{\R^n}f(z)\,\mathrm dz
 =\int_0^1|\{z:f(z)\ge t\}|\,\mathrm dt
\end{equation}
for every nonnegative measurable function $f\le1$ with bounded
support. It follows from Tonelli's theorem applied to
$f(z)=\int_0^1\mathbf1_{\{f(z)\ge t\}}\,\mathrm dt$.
Applied to $f=\tg$, it relates the integral of the normalized
covariogram to the volumes of the convolution bodies.

\section{Proof of the main results}\label{sec:proof}

\subsection{Bound for the integral}

The following lemma collects standard properties of the
volume profile. The level-set inclusion and the integral
identity appear in \cite[Proposition 2.2]{AJV} and
\cite[proof of Theorem 4.6]{AJV}, respectively.
The mass lower bound in \eqref{eq:profile-mass} below is also used, in an equivalent
form, in \cite[proof of Theorem 1.1]{AGMG}.
We include the details, including the equality case,
for completeness.

\begin{samepage}
\begin{lemma}\label{lem:volume-profile}
Let $K,L\subseteq\R^n$ be convex bodies, and set
\[
 S=|K|^{1/n}+|L|^{1/n},\qquad
 H(s)=\frac{|L_s|^{1/n}}{S},\qquad 0\le s\le1.
\]
Then $H$ is continuous, nonnegative and concave,
$H(0)\ge1$, and
\begin{equation}\label{eq:profile-mass}
 n\int_0^1s^{n-1}H(s)^n\,\mathrm ds
 =\frac{|K||L|}{M(K,L)S^n}\ge2^{-n}.
\end{equation}
Equality in the last inequality holds if and only if $K$ and
$-L$ are translates.
\end{lemma}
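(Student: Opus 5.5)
I split the proof into three parts: the structural properties of $H$, the integral identity, and the mass inequality with its equality case.

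\emph{Structural properties.} First I show the level-set inclusion
\[
(1-\lambda)L_{s_0}+\lambda L_{s_1}\subseteq L_{(1-\lambda)s_0+\lambda s_1}.
\]
Take $z_i\in L_{s_i}$. Since $\tg^{1/n}$ is concave on $K+L$ (as noted in Section~\ref{sec:preliminaries}), we get
\[
\tg\bigl((1-\lambda)z_0+\lambda z_1\bigr)^{1/n}\ge(1-\lambda)s_0+\lambda s_1 .
\]
The point $(1-\lambda)z_0+\lambda z_1$ also lies in $K+L$ by convexity. Applying \eqref{eq:classical-BM} to the nonempty compact convex sets $L_{s_0}$ and $L_{s_1}$ then gives concavity of $s\mapsto|L_s|^{1/n}$ on $[0,1]$; the zero-volume case is allowed for $L_1$.

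Nonnegativity is obvious. For $H(0)\ge1$, note that $L_0=K+L$, and \eqref{eq:BM-classical-intro} gives $|K+L|^{1/n}\ge S$.

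For continuity, concavity already gives continuity on $[0,1)$ and lower semicontinuity at $1$. Upper semicontinuity at $1$ follows because the compact sets $L_s$ decrease to $L_1=\bigcap_{s<1}L_s$ as $s\to1^-$, and Lebesgue measure is continuous along decreasing sequences of sets of finite measure. The same argument gives right-continuity elsewhere, but concavity covers that anyway.

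\emph{Integral identity.} I apply \eqref{eq:layer-cake} to $f=\tg$, which satisfies $0\le f\le1$ and has bounded support. Substituting $t=s^n$ gives
\[
\int\tg=\int_0^1|\{\tg\ge t\}|\,dt=n\int_0^1s^{n-1}|L_s|\,ds .
\]
For $t>0$ the set $\{\tg\ge t\}$ coincides with $L_{t^{1/n}}$; the value $t=0$ is a null set. On the other hand, Fubini gives
\[
\int|K\cap(z-L)|\,dz=\int\!\!\int\mathbf 1_K(x)\mathbf 1_L(z-x)\,dx\,dz=|K||L| .
\]
Dividing by $M(K,L)$ and by $S^n$ yields the identity in \eqref{eq:profile-mass}.

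\emph{Mass inequality and equality case.} I use $M(K,L)\le\min\{|K|,|L|\}\le|K|^{1/2}|L|^{1/2}$. Writing $a=|K|^{1/n}$ and $b=|L|^{1/n}$, this gives
\[
\frac{|K||L|}{MS^n}\ge\frac{(ab)^{n/2}}{(a+b)^n}.
\]
The right-hand side is at most $2^{-n}$ by AM--GM, which is the wrong direction, so this route fails. The correct comparison is
\[
\frac{|K||L|}{M S^n}\ge\frac{\max\{|K|,|L|\}}{(a+b)^n}=\frac{\max\{a,b\}^n}{(a+b)^n}\ge2^{-n},
\]
using $\max\{a,b\}\ge(a+b)/2$.

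Equality forces $a=b$, so $|K|=|L|$, and also $M(K,L)=|K|=|L|$. Then for some $z$ we have $|K\cap(z-L)|=|K|=|L|$. Since both bodies are convex bodies of equal volume, this means $K=z-L$ up to a null set, and hence exactly, because convex bodies are closures of their interiors. Therefore $K$ and $-L$ are translates. Conversely, if $K=z_0-L$, then $M=|K|=|L|$ and all the inequalities above are equalities.

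The only delicate step is continuity at $s=1$, where $L_1$ may have measure zero. I handle it by the decreasing-intersection argument above, since $\tg$ is continuous and $L_1=\bigcap_{s<1}L_s$.
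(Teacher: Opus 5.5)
Your proof follows the paper's route step for step. You use the level-set inclusion plus Brunn--Minkowski for concavity, and the layer-cake formula with $t=s^n$ plus Tonelli for the identity. The bound is the same: $M(K,L)\le\min\{|K|,|L|\}$ together with $\max\{a,b\}\ge(a+b)/2$. Your equality argument (null symmetric difference plus $K=\overline{\operatorname{int}K}$) is equivalent to the paper's ``a proper inclusion of convex bodies is strict in volume.'' Two small clean-ups: delete the abandoned AM--GM attempt, and note explicitly that convexity of $L_s$, needed before applying \eqref{eq:classical-BM}, follows from your inclusion with $s_0=s_1$.

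One step is not justified as written: continuity at $s=0$. Concavity on the closed interval $[0,1]$ gives continuity only on $(0,1)$, plus lower semicontinuity at both endpoints. A concave function may jump down at an endpoint, e.g.\ $H(0)=0$ and $H\equiv1$ on $(0,1]$. So the asymmetry you claim between $s=0$ and $s=1$ cannot come from concavity alone.

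What makes your claim true here is monotonicity. Since $L_s\subseteq L_r$ for $r\le s$, $H$ is nonincreasing, and concavity with $H(1)\ge0$ gives
\[
(1-s)H(0)\le(1-s)H(0)+sH(1)\le H(s)\le H(0),
\]
so $H(s)\to H(0)$ as $s\to0^+$. With this line added your argument is complete. It is shorter than the paper's treatment of $s=0$, which uses $\bigcup_{s>0}L_s=\{\tg>0\}=\operatorname{int}(K+L)$ and $|\partial(K+L)|=0$.

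At $s=1$, monotonicity and concavity both give inequalities in the same direction. That is why the decreasing-intersection argument is genuinely needed there, as you use it. Your aside also has the direction reversed. The decreasing intersections $L_s\downarrow L_{s_0}$ as $s\uparrow s_0$ give left-continuity, not right-continuity. Right-continuity would need increasing unions, which is exactly where the boundary issue at $s=0$ arises in the paper's version.
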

\end{samepage}

\begin{proof}
For $z,w\in K+L$ and $0\le\lambda\le1$, convexity of $K$ and $L$
gives
\[
 (1-\lambda)(K\cap(z-L))+\lambda(K\cap(w-L))
 \subseteq K\cap\bigl((1-\lambda)z+\lambda w-L\bigr).
\]
Taking the $n$th root of the volumes, applying Brunn--Minkowski inequality
\eqref{eq:classical-BM} to the intersections, and dividing by
$M(K,L)^{1/n}$, we obtain
\begin{equation}\label{eq:overlap-concavity}
 \tg((1-\lambda)z+\lambda w)^{1/n}
 \ge(1-\lambda)\tg(z)^{1/n}+\lambda\tg(w)^{1/n}.
\end{equation}

Let $r,s\in[0,1]$, $z\in L_r$ and $w\in L_s$.
By the definition of the superlevel sets in
Section~\ref{sec:preliminaries},
$\tg(z)^{1/n}\ge r$ and $\tg(w)^{1/n}\ge s$.
Thus \eqref{eq:overlap-concavity} implies
\[
 \tg((1-\lambda)z+\lambda w)^{1/n}
 \ge(1-\lambda)r+\lambda s.
\]
Equivalently,
\begin{equation}\label{eq:level-inclusion}
 (1-\lambda)L_r+\lambda L_s
 \subseteq L_{(1-\lambda)r+\lambda s}.
\end{equation}
Taking $r=s$ shows that each $L_s$ is convex.

We can therefore apply Brunn--Minkowski inequality \eqref{eq:classical-BM} to
$L_r$ and $L_s$. Together with \eqref{eq:level-inclusion},
this gives
\begin{align*}
 H((1-\lambda)r+\lambda s)
 &=\frac{|L_{(1-\lambda)r+\lambda s}|^{1/n}}{S}\\
 &\ge\frac{|(1-\lambda)L_r+\lambda L_s|^{1/n}}{S}\\
 &\ge\frac{(1-\lambda)|L_r|^{1/n}+\lambda|L_s|^{1/n}}{S}\\
 &=(1-\lambda)H(r)+\lambda H(s).
\end{align*}
This proves the concavity of $H$.
Note that nonnegativity follows from its definition. Since $L_0=K+L$,
Brunn--Minkowski also gives
\[
 H(0)=\frac{|K+L|^{1/n}}{|K|^{1/n}+|L|^{1/n}}\ge1.
\]

Note that concavity implies continuity on $(0,1)$. For the endpoint limits, observe that if $0\le r\le s\le1$,
then $\tg(z)\ge s^n$ implies $\tg(z)\ge r^n$, and hence
$L_s\subseteq L_r$.

Let $(s_j)_{j=1}^\infty$ be an increasing sequence with $s_j>0$ and $\lim _{j \to \infty} s_j = 1$. Then,
$L_{s_{j+1}}\subseteq L_{s_j}$ and
$\bigcap_jL_{s_j}=L_1$.
Since these sets have finite volume, continuity of measure from
above gives
\[
 \lim_{j\to\infty}|L_{s_j}|=|L_1|,
 \qquad \lim_{j\to\infty}H(s_j)=H(1).
\]

If $(s_j)_{j=1}^\infty$ is a decreasing sequence with $s_j>0$ and $\lim _{j \to \infty} s_j = 0$, then
$L_{s_j}\subseteq L_{s_{j+1}}$ and
\[
 \bigcup_jL_{s_j}=\{z:\tg(z)>0\}
 =\operatorname{int}(K+L),
\]
since $\tilde{g}_{K,L}(z) >0$ for any $z \in \operatorname{int}(K + L)$.

The boundary of a convex body has zero volume, so continuity
of measure from below yields
\[
 \lim_{j\to\infty}|L_{s_j}|=|K+L|=|L_0|,
 \qquad \lim_{j\to\infty}H(s_j)=H(0).
\]
This proves continuity on $[0,1]$, as required in
Theorem~\ref{lem:scalar}.


\quad

We next integrate the volumes in the original parameter $\theta$.
For $0<\theta\le1$, the set
$\{z:\tg(z)\ge\theta\}$ is exactly $K+_\theta L$.
Therefore \eqref{eq:layer-cake} gives
\begin{equation}
\label{eq:layer-cake-profile}
 \int_0^1|K+_\theta L|\,\mathrm d\theta
 =\int_{\R^n}\tg(z)\,\mathrm dz =\frac1{M(K,L)}\int_{\R^n}|K\cap(z-L)|\,\mathrm dz.
\end{equation}

Where, by Tonelli's theorem,
\begin{align*}
 \int_{\R^n}|K\cap(z-L)|\,\mathrm dz = \int_{\R^n}\int_{\R^n}
      \mathbf1_K(x)\mathbf1_L(z-x)\,\mathrm dx\,\mathrm dz =|K||L|.
\end{align*}
Substituting $\theta=s^n$ in \eqref{eq:layer-cake-profile} and
using $|K+_{s^n}L|=|L_s|=S^nH(s)^n$, we obtain
\[
 n\int_0^1s^{n-1}H(s)^n\,\mathrm ds
 =\frac1{S^n}\int_0^1|K+_\theta L|\,\mathrm d\theta
 =\frac{|K||L|}{M(K,L)S^n}.
\]
This establishes the identity in \eqref{eq:profile-mass}.

To prove its lower bound, assume without loss of generality that
$|K|\ge|L|$. Since $M(K,L)\le|L|$,
\begin{equation}\label{eq:mass-lower-bound}
 \frac{|K||L|}{M(K,L)S^n}
 \ge\frac{|K|}{(|K|^{1/n}+|L|^{1/n})^n}
 =\left(\frac1{1+(|L|/|K|)^{1/n}}\right)^n
 \ge2^{-n}.
\end{equation}
Equality in the first inequality is equivalent to
$M(K,L)=|L|$, and equality in the last is equivalent to
$|K|=|L|$. If both hold, choose $z_0$ such that
$|K\cap(z_0-L)|=M(K,L)$. Then
\[
 |K\cap(z_0-L)|=|K|=|z_0-L|.
\]
The intersection has positive volume and is therefore
itself a convex body. Since a proper inclusion between
convex bodies gives a strict inequality of volumes,
both inclusions must be equalities. Hence
\[
 K=K\cap(z_0-L)=z_0-L.
\]
Conversely, if $K=z_0-L$, then
$M(K,L)=|K|=|L|$ and $S^n=2^n|K|$, which gives equality
in \eqref{eq:profile-mass}.
\end{proof}

\subsection{An estimate using supporting affine functions}

We now prove Theorem~\ref{lem:scalar}. To explain the
argument, fix $s\in(0,1)$ and write $y=H(s)$. By concavity, $H$ admits
a supporting affine function $h_a(t)=y+a(s-t)$, with
$h_a\ge H$ on $[0,1]$, where $-a$ is the slope.
The endpoint conditions $H(0)\ge1$ and $H(1)\ge0$ restrict
the parameter $a$ to a closed interval. Its endpoints
correspond to the affine functions through $(s,y)$ that
satisfy $h_a(0)=1$ or $h_a(1)=0$, respectively. For fixed $s$ and $y$, the weighted integral
\[
 n\int_0^1 t^{n-1}h_a(t)^n\,\mathrm dt
\]
is convex in $a$ on this interval, and is therefore bounded
above by the larger of its two endpoint values. Since the chosen supporting function lies above $H$,
at least one of these two endpoint integrals must be
at least $2^{-n}$. Evaluating them yields the two
expressions in \eqref{eq:scalar-bound}.

\medskip

For the rest of the proof, write
\begin{equation}\label{eq:c}
 c=\frac12\binom{2n}{n}^{1/n}.
\end{equation}
When $n\ge2$, we have $1<c<2$. Indeed,
\[
 \binom{2n}{n}=\prod_{j=1}^n\frac{n+j}{j}>2^n,
 \qquad
 \binom{2n}{n}<\sum_{j=0}^{2n}\binom{2n}{j}=4^n.
\]
The first strict inequality follows because every factor is at
least $2$ and the factors with $j<n$ are strictly greater than $2$.

To justify the definition of $\beta_n$, let us define
\begin{equation}\label{eq:In-definition}
 I_n(x)=n\int_0^1t^{n-1}(1-xt)^n\,\mathrm dt,
 \qquad 0\le x\le1.
\end{equation}
This is a continuous function of $x$, and differentiation gives
\begin{equation}\label{eq:In-derivative}
 I_n'(x)=-n^2\int_0^1t^n(1-xt)^{n-1}\,\mathrm dt<0.
\end{equation}
The integrand is strictly positive for $0<t<1$, so $I_n$
is strictly decreasing on $[0,1]$. Its endpoint values are
\[
 I_n(0)=1,\qquad
 I_n(1)=\binom{2n}{n}^{-1}<2^{-n},
\]
so the intermediate value theorem gives a unique
$\beta_n\in(0,1)$ with $I_n(\beta_n)=2^{-n}$.

\begin{proof}[Proof of Theorem~\ref{lem:scalar}]
At $s=0$, the right-hand side of \eqref{eq:scalar-bound} is
$\min\{1,c\}=1$, and at $s=1$ it is zero, so these cases follow
from the endpoint assumptions.

Fix $s\in(0,1)$ and put $y=H(s)$. If $y\ge1$, the bound holds
because $1-\beta_ns<1$. We may therefore assume $y<1$.
Concavity of $H(s)$ on $[0,1]$ gives
\begin{equation}\label{eq:y-lower}
 y\ge(1-s)H(0)+sH(1)\ge1-s>0.
\end{equation}

Concavity allows us to construct a supporting line of the hypograph of $H$ at $s$, that is, there exists a real number $a$ such that the affine
function
\begin{equation}\label{eq:support-line}
 h_a(t)=y+a(s-t),\qquad 0\le t\le1,
\end{equation}
satisfies
\begin{equation}\label{eq:support-domination}
 h_a(s)=H(s),\qquad H(t)\le h_a(t)\quad(0\le t\le1).
\end{equation}
To see this explicitly, let $H'_+(s)$ and $H'_-(s)$ denote
the right and left derivatives. Choose $a$ so that
$H'_+(s)\le-a\le H'_-(s)$.
Then,
\[
 \begin{aligned}
 \frac{H(s)-H(t)}{s-t}&\ge H'_-(s)\ge-a
       &&(0\le t<s),\\
 \frac{H(t)-H(s)}{t-s}&\le H'_+(s)\le-a
       &&(s<t\le1).
 \end{aligned}
\]
Rearranging the inequalities yields
$H(t)\le H(s)+a(s-t)=h_a(t)$, proving
\eqref{eq:support-domination}.
Evaluating this inequality at $t=0$ and $t=1$ yields
\[
 y+as=h_a(0)\ge H(0)\ge1,\qquad
 y-a(1-s)=h_a(1)\ge H(1)\ge0.
\]

Thus, defining
\begin{equation}\label{eq:admissible-slopes}
 a_-=\frac{1-y}{s},\qquad a_+=\frac{y}{1-s},
\end{equation}
we have $a_-\le a\le a_+$. Notice that
$a_-\le a_+$ is equivalent to $y\ge1-s$, already proved
in \eqref{eq:y-lower}.
In fact $y>1-s$; otherwise $a_-=a_+=1$, so
$H(t)\le1-t$ and
\[
 n\int_0^1t^{n-1}H(t)^n\,\mathrm dt
 \le\binom{2n}{n}^{-1}<2^{-n},
\]
contrary to \eqref{eq:scalar-mass}. Hence $a_-<a_+$.

We now keep $s$ and $y=H(s)$ fixed and allow $a$ to range
over the whole interval $[a_-,a_+]$. The functions in this
enlarged family need not all lie above $H$, but the
parameter of every supporting function of the form
\eqref{eq:support-line} belongs to this interval. 

For every $a\in[a_-,a_+]$, we have $h_a(0)\ge1$ and
$h_a(1)\ge0$. Since $h_a$ is affine, it is nonnegative
on $[0,1]$ and strictly positive on $(0,1)$.
For $a=a_-$ and $a=a_+$, the corresponding functions are
\begin{equation}\label{eq:endpoint-functions}
 \begin{aligned}
  h_-(t)&:=h_{a_-}(t)=1-\frac{1-y}{s}\,t,\\
  h_+(t)&:=h_{a_+}(t)=\frac{y}{1-s}(1-t).
 \end{aligned}
\end{equation}
Both take the value $y$ at $t=s$, and they satisfy
$h_-(0)=1$ and $h_+(1)=0$, respectively.
We will bound the weighted integral of the chosen
supporting function by the larger of the corresponding
integrals for $h_-$ and $h_+$.

Define the function of the parameter $a$
\begin{equation}\label{eq:F-definition}
 F(a)=n\int_0^1t^{n-1}h_a(t)^n\,\mathrm dt,
 \qquad a\in[a_-,a_+].
\end{equation}
Since $n$ is an integer, $F$ is a polynomial in $a$.
Differentiating twice, we find
\begin{equation}\label{eq:F-convexity}
 F''(a)=n^2(n-1)\int_0^1
 t^{n-1}(s-t)^2h_a(t)^{n-2}\,\mathrm dt>0.
\end{equation}
Since $n\ge2$, the factor $t^{n-1}$ is positive for $0<t<1$,
and $h_a(t)>0$ there. Thus the integrand is positive at every
$t\in(0,1)$ except at $t=s$.
For $n=2$, the last factor in the integrand is simply $1$.
This proves strict convexity of $F$.

For any $a\in[a_-,a_+]$, set
$\lambda=(a-a_-)/(a_+-a_-)\in[0,1]$. Then
$a=(1-\lambda)a_-+\lambda a_+$, so convexity gives
\begin{equation}\label{eq:F-endpoint-bound}
 F(a)\le(1-\lambda)F(a_-)+\lambda F(a_+)
 \le\max\{F(a_-),F(a_+)\}.
\end{equation}
The first inequality is strict whenever $a_-<a<a_+$.
Using \eqref{eq:endpoint-functions} and the beta integral \eqref{eq:beta-value},
we obtain the following expressions for $F(a_-)$
and $F(a_+)$:
\begin{align}
 F(a_-)
 &=n\int_0^1t^{n-1}
       \left(1-\frac{1-y}{s}\,t\right)^n\,\mathrm dt
 =I_n\left(\frac{1-y}{s}\right),\label{eq:F-left}\\
 F(a_+)
 &=\left(\frac{y}{1-s}\right)^n
      n\int_0^1t^{n-1}(1-t)^n\,\mathrm dt\notag\\
 &=\frac{y^n}{\binom{2n}{n}(1-s)^n}
 =2^{-n}\left(\frac{y}{c(1-s)}\right)^n.
 \label{eq:F-right}
\end{align}
Note that the argument $(1-y)/s$ in \eqref{eq:F-left} belongs to $(0,1)$
because $1-s<y<1$.

Choose now a parameter $a$ for which \eqref{eq:support-domination}
holds. Combining the assumption on the lower bound on the integral of $H$ \eqref{eq:scalar-mass}, the
inequality $H\le h_a$, and \eqref{eq:F-endpoint-bound}, we obtain
\begin{equation}\label{eq:two-endpoints}
 \begin{aligned}
 2^{-n}
 &\le n\int_0^1t^{n-1}H(t)^n\,\mathrm dt\\
 &\le F(a)\le\max\{F(a_-),F(a_+)\}.
 \end{aligned}
\end{equation}

Now, suppose that
$y<\min\{1-\beta_ns,c(1-s)\}$.
The first inequality, $y<1-\beta_ns$, implies
$(1-y)/s>\beta_n$. Since $I_n$ is strictly decreasing
by \eqref{eq:In-derivative}, \eqref{eq:F-left} gives
\[
 F(a_-)=I_n\left(\frac{1-y}{s}\right)
 <I_n(\beta_n)=2^{-n}.
\]
The second inequality, $y<c(1-s)$, gives, by
\eqref{eq:F-right},
\[
 F(a_+)=2^{-n}\left(\frac{y}{c(1-s)}\right)^n<2^{-n}.
\]
Both $F(a_-)$ and $F(a_+)$ would then be smaller
than $2^{-n}$,
contradicting \eqref{eq:two-endpoints}.
Then, $H(s)\ge\min\{1-\beta_n s,\ c(1-s)\}$, for any $0\le s\le1$, proving \eqref{eq:scalar-bound}.

\quad

We next prove the assertion about equality.
Suppose that $0<s<1$ and
\[
 y=H(s)=\min\{1-\beta_ns,c(1-s)\}.
\]
In particular $y<1$.
The inequalities $y\le1-\beta_ns$ and $y\le c(1-s)$ imply
\[
 F(a_-)=I_n\left(\frac{1-y}{s}\right)\le2^{-n},
 \qquad
 F(a_+)=2^{-n}\left(\frac{y}{c(1-s)}\right)^n\le2^{-n}.
\]
Consequently, for a parameter $a$ satisfying
\eqref{eq:support-domination}, we have the full chain
\begin{equation}\label{eq:equality-chain}
 2^{-n}
 \le n\int_0^1t^{n-1}H(t)^n\,\mathrm dt
 \le F(a)
 \le\max\{F(a_-),F(a_+)\}
 \le2^{-n}.
\end{equation}
Every inequality in this chain is therefore an equality.
If $a_-<a<a_+$, the strict part of
\eqref{eq:F-endpoint-bound} would instead give
\[
 F(a)<(1-\lambda)F(a_-)+\lambda F(a_+)\le2^{-n},
\]
which contradicts \eqref{eq:equality-chain}.
Hence the supporting parameter must be $a=a_-$ or $a=a_+$.

The equality
$n\int_0^1t^{n-1}H(t)^n\,\mathrm dt=F(a)$
in \eqref{eq:equality-chain} gives
\begin{equation}\label{eq:zero-integral}
 n\int_0^1t^{n-1}\bigl(h_a(t)^n-H(t)^n\bigr)\,\mathrm dt=0.
\end{equation}
The difference $h_a^n-H^n$ is continuous and nonnegative on
$[0,1]$. If it was positive at some point, it would be positive
on an interval contained in $(0,1)$, where $t^{n-1}>0$, so
the integral in \eqref{eq:zero-integral} would then be strictly
positive. Therefore, $h_a(t)^n=H(t)^n$ for all $t\in[0,1]$,
and nonnegativity implies that $H=h_a$.

If $a=a_-$, then $F(a_-)=2^{-n}$ and \eqref{eq:F-left}
imply
\[
 I_n(a_-)=2^{-n}=I_n(\beta_n).
\]
Strict monotonicity of $I_n$ gives $a_-=\beta_n$.
Using \eqref{eq:endpoint-functions}, we conclude that
$H(t)=h_-(t)=1-\beta_nt$.
If $a=a_+$, then \eqref{eq:F-right} and $F(a_+)=2^{-n}$
give
\[
 \left(\frac{y}{c(1-s)}\right)^n=1.
\]
Since $y>0$, this is equivalent to $y/(1-s)=c$, and hence
$H(t)=h_+(t)=c(1-t)$.
The equality in \eqref{eq:scalar-mass} has already been obtained
in \eqref{eq:equality-chain}.

Finally, the affine functions $H(t)=1-\beta_nt$ and
$H(t)=c(1-t)$ are continuous, nonnegative and concave on $[0,1]$.
Their values at zero are $1$ and $c>1$, respectively, and
\[
 \begin{aligned}
 n\int_0^1t^{n-1}(1-\beta_nt)^n\,\mathrm dt
 &=I_n(\beta_n)=2^{-n},\\
 n\int_0^1t^{n-1}\bigl(c(1-t)\bigr)^n\,\mathrm dt
 &=\frac{c^n}{\binom{2n}{n}}=2^{-n}.
 \end{aligned}
\]
Thus both functions satisfy all the hypotheses.
At a fixed $s$, if $1-\beta_ns\le c(1-s)$, choosing
$H(t)=1-\beta_nt$ gives equality in \eqref{eq:scalar-bound}.
If $c(1-s)<1-\beta_ns$, choosing $H(t)=c(1-t)$ gives equality
instead. Therefore no larger pointwise lower bound can hold
for all functions satisfying the hypotheses of the theorem.
\end{proof}

\subsection{Completion of the proof}
We now complete the proof of Theorem \ref{thm:main}.

\begin{proof}[Proof of Theorem~\ref{thm:main}]
For each pair $K,L$, apply Theorem~\ref{lem:scalar} to the function
$H$ in Lemma~\ref{lem:volume-profile}. This gives
\[
 \frac{|K+_{s^n}L|^{1/n}}{|K|^{1/n}+|L|^{1/n}}
 \ge\min\{1-\beta_ns,c(1-s)\}.
\]
Taking the infimum over $K,L$ and setting $s=\theta^{1/n}$
proves \eqref{eq:new-bound}.

Define
\[
 s_*=\frac{c-1}{c-\beta_n}.
\]
Since $c>1$ and $0<\beta_n<1$, we have $0<s_*<1$.
The difference between the two affine expressions is
\[
 (1-\beta_ns)-c(1-s)
 =1-c+(c-\beta_n)s=(c-\beta_n)(s-s_*).
\]
Thus $c(1-s)$ is the smaller expression precisely when $s\ge s_*$.
For these $s$, our lower bound and \eqref{eq:simplex-upper} give
\[
 c(1-s)\le\varphi_n(s^n)\le c(1-s).
\]
Consequently $\varphi_n(s^n)=c(1-s)$ on $[s_*,1]$.
Substituting the value of $c$ into $s_*^n$ gives
\eqref{eq:exact-value}--\eqref{eq:sharp-range}.

Suppose that equality in \eqref{eq:BM} holds for a pair $K,L$
and a parameter $\theta=s^n$ with $s_*\le s<1$.
Since $\varphi_n(s^n)=c(1-s)$, this says
\[
 H(s)=c(1-s)=\min\{1-\beta_ns,c(1-s)\}.
\]
The equality statement of Theorem~\ref{lem:scalar} therefore gives
one of the two affine profiles and
\[
 n\int_0^1t^{n-1}H(t)^n\,\mathrm dt=2^{-n}.
\]
By Lemma~\ref{lem:volume-profile}, there is a vector $z_0$
such that $K=z_0-L$. Replacing $L$ by $L-z_0$ changes each
convolution body by a translation and leaves its volume unchanged.
Therefore, we may assume $K=-L$.

For this pair, $M(K,-K)=|K|$, and
\[
 K+_1(-K)=\{z:|K\cap(z+K)|=|K|\}.
\]
If $z\in K+_1(-K)$, then $K\cap(z+K)$ has the same
volume as both $K$ and $z+K$. Since a proper inclusion
between convex bodies gives a strict inequality of
volumes, $K=z+K$.
For any $x\in K$, this identity gives $x+mz\in K$ for
every integer $m\ge0$. Since $K$ is bounded, we must have
$z=0$. Conversely, $z=0$ gives the full intersection.
Consequently,
\[
 L_1=K+_1(-K)=\{0\},
 \qquad H(1)=0.
\] 
Thus, the possibility $H(t)=1-\beta_nt$ is excluded because
$1-\beta_n>0$. This also excludes that possibility when
$s=s_*$, where the two affine expressions coincide.

It follows that $H(t)=c(1-t)$ on $[0,1]$.
At $t=0$, this gives
\[
 \frac{|K-K|^{1/n}}{2|K|^{1/n}}=c,
 \qquad
 |K-K|=(2c)^n|K|=\binom{2n}{n}|K|.
\]
The equality characterization in Rogers--Sephards inequality \eqref{eq:RS} now shows that
$K$ is a simplex.
Conversely, if $K$ and $-L$ are translates of the same simplex,
translation invariance and \eqref{eq:simplex-levels} give
$H(s)=c(1-s)$, so equality holds in the stated interval.
The restriction $\theta<1$ in the equality statement is necessary:
for example, every pair $K,-K$ has $|K+_1(-K)|=0$,
whether or not $K$ is a simplex.
\end{proof}

\begin{rmk}
For every pair of convex bodies $K,L$, the function $H$ in
Lemma~\ref{lem:volume-profile} satisfies
\[
 H(s)>1-\beta_n s,
\]
for $0<s<s_*$, where
\[
 s_*=\frac{c-1}{c-\beta_n}.
 \]
Since $1-\beta_n s<c(1-s)$ on this interval,
equality would imply, by Theorem~\ref{lem:scalar},
that $H(t)=1-\beta_n t$ for all $t\in[0,1]$,
together with equality in the last inequality of equation
\eqref{eq:profile-mass}.
Lemma~\ref{lem:volume-profile} would then imply that
$K$ and $-L$ are translates. As shown in the preceding
proof, this gives $H(1)=0$, contradicting
$H(1)=1-\beta_n>0$.

This excludes attainment of the first branch by any pair
of convex bodies for $0<s<s_*$. However, this does not necessarily
establish the strict inequality
\[
 \varphi_n(s^n)>1-\beta_n s,
 \qquad 0<s<s_*,
\]
since the infimum over all pairs of convex bodies may still equal
$1-\beta_ns$
\end{rmk}

\section{Other bounds}
\label{sec:comparison}

After the substitution $u=xt$, the integral defining
$I_n(x)$ is taken over $[0,x]$. Extending this interval
to $[0,1]$ gives an upper bound for $I_n(x)$, but leads
to a weaker lower bound for $H(s)$. Indeed, for $0<x<1$,
\eqref{eq:beta-value} gives
\begin{align*}
 I_n(x)
 &=\frac{n}{x^n}\int_0^x u^{n-1}(1-u)^n\,\mathrm du\\
 &<\frac{n}{x^n}\int_0^1 u^{n-1}(1-u)^n\,\mathrm du
 =2^{-n}(cx)^{-n}.
\end{align*}
The inequality is strict because the integrand is
positive on $(x,1)$. In particular,
$I_n(1/c)<2^{-n}=I_n(\beta_n)$.
Since $I_n$ is strictly decreasing, we obtain
\begin{equation}\label{eq:beta-comparison}
 0<\beta_n<\frac1c=2\binom{2n}{n}^{-1/n}.
\end{equation}

If we used the upper bound $2^{-n}(cx)^{-n}$ in place
of $I_n(x)$ in \eqref{eq:F-left}, with $x=(1-y)/s$,
we would obtain $F(a_-)<2^{-n}$ whenever $y<1-s/c$.
The argument in the proof of Theorem~\ref{lem:scalar}
would then give the weaker bound
\[
 H(s)\ge\min\{1-s/c,c(1-s)\}.
\]
By \eqref{eq:beta-comparison}, the first expression
in our bound satisfies
\[
 1-\beta_ns>1-s/c,\qquad s>0.
\]
The weaker bound equals $c(1-s)$ for $s\ge c/(c+1)$,
whereas our bound already equals $c(1-s)$ for
$s\ge s_*$, where
\[
 s_*=\frac{c-1}{c-\beta_n}
 <\frac{c}{c+1}<\frac23.
\]
Thus our bound determines $\varphi_n$ on $[s_*^n,1]$,
rather than on the smaller interval
$[(c/(c+1))^n,1]$ obtained by replacing $I_n(x)$
with the upper bound above.

\subsection{Comparison with the previous known bound}

In the non sharp regime, authors in \cite{AGMG} obtained the lower bound
$1-(4-c)s/3$. The weaker function $1-s/c$ is already strictly
larger than this expression for $s>0$, since
\begin{align*}
 \left(1-\frac{s}{c}\right)-\left(1-\frac{4-c}{3}s\right)
 &=\frac{(c-1)(3-c)}{3c}\,s,\\
 c(1-s)-\left(1-\frac{4-c}{3}s\right)
 &=(c-1)\left(1-\frac43s\right).
\end{align*}
For $0<s<3/4$, both right-hand sides are positive because
$1<c<2$. Since $1-\beta_ns>1-s/c$, each of the two expressions
giving the lower bound of $\varphi_n$ in Theorem \ref{thm:main} are strictly larger than
$1-(4-c)s/3$, and so is their minimum.
This proves the strict improvement over \eqref{eq:AGMG}
for $0<\theta<(3/4)^n$.

\subsection{An upper bound from Euclidean balls}

The example of Euclidean balls gives an upper bound for
$\varphi_n$ and restricts how far the interval on which the
simplex gives the optimal value can extend towards zero.
More precisely,
\begin{equation}\label{eq:ball-upper}
 \varphi_n(s^n)
 \le \min\{c(1-s),\sqrt{1-s^2}\},
 \qquad 0\le s\le1,
\end{equation}
and
\begin{equation}\label{eq:ball-obstruction}
 \varphi_n(s^n)<c(1-s),
 \qquad
 0<s\le\frac{c^2-1}{c^2+1}.
\end{equation}
Thus the simplex formula is not optimal at any parameter
in \eqref{eq:ball-obstruction}. In particular, if
\eqref{eq:exact-value} holds for every
$\theta\in[\sigma^n,1]$, with $0\le\sigma\le s_*$, then
\[
 \frac{c^2-1}{c^2+1}<\sigma\le s_*.
\]
Our bounds do not determine how far the exact interval
can be extended within this range.

In the rescaled variable $s=\theta^{1/n}$, the endpoints
of this range tend to $3/5$ and $2/3$, respectively,
as $n\to\infty$.

To prove these assertions, let $B$ be the Euclidean unit
ball and fix $0<s<1$. Since $M(B,B)=|B|$, the corresponding
level set is
\[
 L_s=B+_{s^n}B
 =\{z\in2B:|B\cap(z-B)|\ge s^n|B|\}.
\]
By convexity and rotational symmetry, $L_s$ is a ball
centered at the origin. Continuity of the covariogram
shows that its radius lies strictly between zero and two.
Writing $L_s=2rB$, with $0<r<1$, we obtain
\[
 \varphi_n(s^n)
 \le\frac{|L_s|^{1/n}}{2|B|^{1/n}}=r.
\]
For any unit vector $u$, the boundary point $2ru$ satisfies
\[
 |B\cap(2ru-B)|=s^n|B|.
\]
Indeed, if the volume were larger, continuity would put
points slightly farther along the same ray in $L_s$,
contradicting its radius.

Translate this intersection by $-ru$. Since $B=-B$,
the translated set is $(B-ru)\cap(B+ru)$.
For every point $x$ in this set,
\[
 \|x+ru\|_2^2\le1,\qquad
 \|x-ru\|_2^2\le1.
\]
Adding these inequalities gives
$\|x\|_2^2+r^2\le1$, and hence
\[
 (B-ru)\cap(B+ru)\subseteq\sqrt{1-r^2}\,B.
\]

The inclusion is strict in volume. Indeed, a point $tu$
with $1-r<t<\sqrt{1-r^2}$ lies in the interior of the
ball on the right and outside the intersection.
A sufficiently small open ball centered at $tu$ has
the same property. Therefore,
\[
 s^n
 =\frac{|B\cap(2ru-B)|}{|B|}
 <(1-r^2)^{n/2},
\]
which yields
\[
 \varphi_n(s^n)\le r<\sqrt{1-s^2}.
\]
Combining this estimate with \eqref{eq:simplex-upper}
and the endpoint values $\varphi_n(0)=1$ and
$\varphi_n(1)=0$ proves \eqref{eq:ball-upper}.

For $0<s<1$, comparison with the simplex gives
\begin{align*}
 \sqrt{1-s^2}\le c(1-s)
 &\quad\Longleftrightarrow\quad
 1+s\le c^2(1-s)\\
 &\quad\Longleftrightarrow\quad
 s\le\frac{c^2-1}{c^2+1}.
\end{align*}
Combining this with the strict upper bound just proved
gives \eqref{eq:ball-obstruction}, including its right
endpoint.

Finally,
\[
 \frac{4^n}{2n+1}\le\binom{2n}{n}<4^n
 \quad\Longrightarrow\quad c\longrightarrow2.
\]
Also, $(1-t/2)^n>2^{-n}$ for $0<t<1$, so
$I_n(1/2)>2^{-n}=I_n(\beta_n)$. The strict decrease
of $I_n$, together with the bound of $\beta _n$ in \eqref{eq:beta-comparison},
therefore gives $1/2<\beta_n<1/c$. It follows that
\[
 \beta_n\longrightarrow\frac12,\qquad
 \frac{c^2-1}{c^2+1}\longrightarrow\frac35,
 \qquad
 s_*=\frac{c-1}{c-\beta_n}\longrightarrow\frac23.
\]

\quad

\quad

\section*{Acknowledgements}

The author thanks Prof. David Alonso Gutiérrez for his careful reading the manuscript
and for helpful suggestions on its presentation.

\end{document}